\journal{Expositiones Mathematicae}
\newtheorem{thm}{Theorem}%%[section]
\newtheorem{lem}[thm]{Lemma}
\newtheorem{defn}[thm]{Definition}
\newproof{proof}{Proof}
\newcommand{\abs}[1]{\left\vert #1 \right\vert}
\newcommand{\scp}[2]{ \left\langle #1,\ #2  \right\rangle }
\newcommand{\RE}{ \mathop{\mathrm{Re}} }
\newcommand{\IM}{ \mathop{\mathrm{Im}}  }
\newcommand{\C}{\mathbb{C}}
\newcommand{\Cn}{ \mathbb{C}^n }
\newcommand{\Om}{\Omega = \{z\in\Cn : \rho(z)<0\}}
\newcommand{\dom}{\partial\Omega}
\newcommand{\Lp} { L^{p(\cdot)}(\partial\mathcal{E})}
\begin{document}

\begin{frontmatter}

\title{An example of a space $L^{p(\cdot)}$ on which the Cauchy-Leray-Fantappi\`{e} operator for complex ellipsoid is not bounded}
\tnotetext[t1]{This work is supported by the Russian Science Foundation grant 24-11-00087.}

%% or include affiliations in footnotes:
\author[mymainaddress]{Aleksandr Rotkevich}
\ead{rotkevichas@gmail.com}
\address[mymainaddress]{Department of Mathematical analysis, Mathematics and Mechanics Faculty, St. Petersburg  University, 198504,  Universitetsky prospekt, 28, Peterhof, St. Petersburg, Russia\\
Department of Mathematics and Computer Science, St. Petersburg University,	Russia, 14 line of the VO, 29B, 199178 St. Petersburg, Russia
}

\begin{abstract}
We construct an example of a Lebesgue space with variable exponent on which Cauchy-Leray-Fantappi\`{e} operator associated with a complex ellipsoid is not bounded. This result extends previous counterexamples for the unit ball and demonstrates that the logarithmic continuity condition for the exponent function $p(\cdot)$ is sharp even for non-strictly convex domains. The proof is based on an explicit construction of test functions supported near points where the boundary fails to be strictly convex. 
\end{abstract}

\begin{keyword}
 Lebesgue spaces with variable exponents \sep Cauchy operator\sep
 Cauchy-Leray-Fantappi\`{e} operator \sep Complex ellipsoid
\MSC[2010] 32A55\sep 46E30
\end{keyword}

\end{frontmatter}

%\linenumbers

\section{Introduction}

Since the 1980s, Lebesgue spaces with variable exponents have been actively studied. Interest in these spaces arises from their numerous applications in partial differential equations and the calculus of variations. Substantial progress has been achieved in the theory of classical operators in harmonic analysis, in particular, for the Hardy–Littlewood maximal operator and for singular integral Calder\`{o}n--Zygmund kernels.

It is well-known that the centered maximal Hardy--Littlewood operator is bounded on the
Lebesgue space $L^{p(\cdot)}$ if the function $p$ satisfies the logarithmic
continuity condition (also referred to as logarithmic H\"{o}lder continuity). This condition is sharp:  Pick and
Ru$\rm{\check{z}}$i$\rm{\check{c}}$ka in \cite{PR01} provided an example
of exponent $p$ that fails to be logarithmic continuous only at
one point for which the Hardy--Littlewood maximal operator is not
bounded on $L^{p(\cdot)}.$ 

In our earlier work \cite{R17}, we examined the
boundedness of the Cauchy operator on the unit circle and the Cauchy--Leray--Fantappi\`{e} integral operator on the unit ball in Lebesgue spaces with variable exponent. There we provided examples of spaces in which these operators are not bounded. The purpose of this note is to extend these counterexamples tho the broader class of domains with degenerate boundary geometry. The example is constructed for the case of a complex ellipsoid
\begin{equation}
	\mathcal{E} = \{ z = (z_1,\ldots,z_n)\in\C^n : \rho(z)= \abs{z_1}^{2m_1}+\ldots+ \abs{z_n}^{2m_n}<1\},
\end{equation}
where $m_j\geq1$ for every $j=1,\ldots,n.$

Unlike the Euclidean ball, the domain $\mathcal{E}$ is not uniformly convex whenever at least one of the parameters $m_j$ differs from $1.$

\paragraph{Principal notations}

\begin{itemize}
	\item $\Cn$ is the space of $n$ complex variables, $n\geq 2,$ $z =
	(z_1,\ldots, z_n),\ z_j = x_j + i y_j;$
    \item Inner product 
    $\scp{z}{w} =
    z_1w_1+\ldots+z_nw_n,\ z=(z_1,\ldots,z_n),\\
    w=(w_1,\ldots,w_n)\in\Cn;
    $
    \item Vector norm  $|z|=\sqrt{\scp{z}{\bar{z}}},$ $z\in\Cn;$
    %\item $dS_n$ - surface measure on $S_n=\partial B_n.$
    \item The expression $g\lesssim f$
     means that there exists a constant $c>0$ such that $g\leq c f.$
    \item The expression $g\asymp f$ means that there exist constant $c,C>0$ such that $cf\leq g\leq C f.$
%    \item For a function $f$ we define operators of complex an conjugate complex derivative 
%    \begin{equation*}
%    	\partial_j f =\frac{\partial f}{\partial z_j} = \frac{1}{2}\left( \frac{\partial f}{\partial x_j} - i \frac{\partial f}{\partial y_j}\right), \quad \bar\partial_j f = \frac{\partial f}{\partial\bar{z}_j} = \frac{1}{2}\left( \frac{\partial f}{\partial x_j} + i \frac{\partial f}{\partial y_j}\right);
%    \end{equation*}
%    
%    \[
%    \partial f = \sum\limits_{k=1}^{n} \frac{\partial f}{\partial z_k} dz_k,\quad \bar{\partial} f = \sum\limits_{k=1}^{n} \frac{\partial f}{\partial\bar{z}_k} d\bar{z}_k,\quad df=\partial f+  \bar{\partial} f;
%    \]
%    \[
%    \abs{\bar\partial f} = \abs{\bar\partial_1 f} + \ldots+\abs{\bar\partial_n f}
%    \]
%    We use the notation 
%    $
%    \scp{\eta}{w} = \sum\limits_{k=1}^{n} \eta_k w_k
%    $
%    to indicate the action of the differential form $\eta=\sum\limits_{k=1}^{n}\eta_k dz_k$ of type $(1,0)$ on the vector
%    $w\in\Cn.$
\end{itemize}

\section{Boundedness of Cauchy--Leray--Fantappi\`{e} integral operator for complex ellipsoid}

Let  $\Om$ be a bounded domain with a defining function $\rho\in
C^2$ such that $\partial\rho\neq 0$ on $\dom$ and $d\sigma$ be a
surface measure on $\dom.$ Consider a $\sigma-$measurable function
$p:\dom\to[1,\infty].$ This function defines the generalized Lebesgue
space $L^{p(\cdot)}(\partial\Omega)$
$$
 L^{p(\cdot)}(\partial\Omega) = \left\{f \in L^1(\dom) : \int_{\dom} |\lambda
f(z)|^{p(z)}d\sigma<\infty\ \ \rm{for\ some}\ \lambda>0  \right\}
$$
that with the Luxemburg norm
$$
\|f\|_{L^{p(\cdot)}(\partial\Omega)} = \inf \left\{\lambda>0 : \int_{\dom} \left|
\frac{f(z)}{\lambda} \right|^{p(z)}d\sigma(z)\leq 1\right\}
$$
is a Banach space.

Suppose $\Omega\subset\C^n,$ $n\geq2,$ is a bounded domain with $C^2-$boundary and $q:\partial\Omega\times\Omega\to\C^n$ is a $C^1-$map that satisfies $\scp{q(\xi,z)}{\xi-z}\neq 0$ for every $\xi\in\partial\Omega$ and $z\in\Omega$ then for any function $f$ holomorphic in $\Omega$ and
continuous in $\overline{\Omega}$ the following representation
formula holds
\begin{equation}
 f(z) = Kf(z)  = \frac{1}{(2\pi i)^n} \int_{\dom}
  \frac{f(\xi) q\wedge(\bar{\partial}_\xi q)^{n-1}}{\scp{q}{\xi-z}^n},\quad
  z\in\Omega.
\end{equation}

For the complex ellipsoid
$\Omega = \mathcal{E}$ we choose $q=\partial\rho,$ where $\rho(z) = \abs{z_1}^{2\alpha_1}+\ldots + \abs{z_n}^{2\alpha_n}.$ The real part of $\scp{\partial\rho(\xi)}{\xi-z}$ is equivalent to the Euclidean distance from $z\in \C$ to the real tangent plane at point $\xi\in\partial \mathcal{E}$ and, consequently, strictly positive if $z\in \mathcal{E}.$ Moreover the form $c_n
\partial\rho(\xi)\wedge(\bar{\partial}\partial\rho(\xi))^{n-1}$ defines a positive measure $dS$ such that $dS\sim \omega d\sigma,$ where
$\omega(\xi)=\prod\limits_{j = 1}^n m_j^2|\xi_j|^{2(m_j-1)}$ and $d\sigma$ is the induced Lebesgue measure on $\partial \mathcal{E}$. 
%We will also consider the volume measure $dV$ generated by the form $ (\bar{\partial}\partial\rho)^n = d\left( 
%\partial\rho(\xi)\wedge(\bar{\partial}\partial\rho(\xi))^{n-1}\right)$ which is equivalent to the Lebesgue measure $d\lambda$, that is $dV\sim \omega d\lambda.$ 
Consequently, 
\begin{equation}\label{eq1}
	 Kf(z) = \frac{1}{(2\pi i)^n} \int_{\partial\mathcal{E}} \frac{f(\xi) dS(\xi)}{w(\xi,z)^n},
\end{equation}
where
\begin{equation}
w(\xi,z) = \sum\limits_{j=1}^n m_j \abs{\xi_j}^{2(m_j-1)} \overline{\xi}_j (\xi_j-z_j).
\end{equation}

The operator $K$ is a singular integral
operator with a Calder\`{o}n--Zygmund kernel (see \cite{HA99}). 
In view of this result the operator $K$ is
bounded on the Lebesgue space $\Lp$ if the exponent $p$ is logarithmic
continuous and $1<p<\infty$ on $\partial \mathcal{E}$ (see \cite{DHHR11} for details
about Calder\`{o}n--Zygmund operators in variable-exponent spaces).

\begin{thm}
 Suppose $\mathcal{E}$ is a complex ellipsoid and $p:\partial \mathcal{E}\to (1,\infty)$
 is a logarithmic continuous (or logarithmic H\"{o}lder) function, that
 is
\begin{equation} \label{logCont}
  |p(z)-p(w)|\lesssim\frac{1}{ |\log{|z-w|}| },\ \textit{for}\ z,w\in\partial \mathcal{E},
\end{equation}
 then the operator $K$ in \eqref{eq1} maps $\Lp$ continuously into itself.
 \end{thm}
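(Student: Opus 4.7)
The plan is to combine two ingredients that are essentially already cited in the excerpt: the fact that $K$ is a Calderón--Zygmund operator with respect to a suitable quasi-metric on $\partial\mathcal{E}$, and the general theorem that such operators extend boundedly to variable-exponent Lebesgue spaces once the exponent is log-Hölder continuous and $1<p<\infty$ (the result attributed to \cite{DHHR11}). So the proof reduces to checking that the geometric setup on $\partial\mathcal{E}$ fits the abstract framework in which that theorem applies.

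First, I would recall from \cite{HA99} that the kernel $1/w(\xi,z)^n$, together with the measure $dS\sim \omega\,d\sigma$, defines a Calderón--Zygmund operator on the space of homogeneous type $(\partial\mathcal{E},d,dS)$, where the natural quasi-metric is
\begin{equation*}
 d(\xi,\eta) \asymp |\RE w(\xi,\eta)|^{1/2} + |\IM w(\xi,\eta)|,
\end{equation*}
and the induced balls $B(\xi,r)$ satisfy $dS(B(\xi,r)) \asymp r^{2n}$ uniformly in $\xi\in\partial\mathcal{E}$ and small $r>0$. With respect to this quasi-metric, one verifies the standard size estimate $|w(\xi,z)|^{-n}\lesssim d(\xi,z)^{-2n}$ and the corresponding Hölder-type smoothness estimate in $\xi$ and $z$; these are exactly the calculations carried out in \cite{HA99}, so I would only sketch them and then quote the result.

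Next, I would invoke the version of the Calderón--Zygmund theorem for variable-exponent Lebesgue spaces on spaces of homogeneous type (see \cite{DHHR11}): a Calderón--Zygmund operator is bounded on $L^{p(\cdot)}$ whenever $1<\operatorname{ess\,inf}p\leq \operatorname{ess\,sup}p<\infty$ and $p$ is log-Hölder continuous with respect to the underlying quasi-metric. The hypotheses $1<p<\infty$ and \eqref{logCont} give the first requirement directly. For the second, I would note that since $d(\xi,\eta)\gtrsim |\xi-\eta|$ on $\partial\mathcal{E}$ (the quasi-metric dominates the Euclidean one up to a constant), log-Hölder continuity in the Euclidean sense \eqref{logCont} implies log-Hölder continuity in $d$, possibly with a different constant: indeed $|\log d(\xi,\eta)|^{-1}\lesssim |\log|\xi-\eta||^{-1}$ for $\xi,\eta$ close. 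This is the step I would check most carefully, since the direction of the inequality is crucial.

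The main obstacle is really only this last compatibility issue between the two notions of log-Hölder continuity; once it is established, the statement follows by directly applying the cited abstract theorem to $K$. Everything else—the representation \eqref{eq1}, the positivity of the measure $dS$, and the Calderón--Zygmund nature of the kernel—has been set up in the preceding discussion, so I would keep the exposition short and organize the write-up around verifying the hypotheses of the abstract result rather than re-proving any of its contents.
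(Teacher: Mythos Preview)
Your plan is exactly what the paper does: Theorem~1 is stated without a self-contained proof, as an immediate consequence of \cite{HA99} (the kernel of $K$ is Calder\`on--Zygmund on the space of homogeneous type $(\partial\mathcal{E},d,dS)$) combined with the variable-exponent Calder\`on--Zygmund theory of \cite{DHHR11}, once one checks that the Euclidean log-H\"older condition~\eqref{logCont} is equivalent to log-H\"older continuity in the quasi-metric $d$.

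One correction to the compatibility step you single out: the inequality $d(\xi,\eta)\gtrsim|\xi-\eta|$ is \emph{false} on $\partial\mathcal{E}$ as soon as some $m_j>1$. For instance, with $\xi=(0,1)$ and $\eta=(\eps,(1-\eps^{2m_1})^{1/(2m_2)})\in\partial\mathcal{E}$ one computes $w(\xi,\eta)\asymp\eps^{2m_1}$, so both your metric and the paper's satisfy $d(\xi,\eta)\lesssim\eps^{m_1}\ll\eps\asymp|\xi-\eta|$. What the paper records instead (in the remark following the theorem) is the two-sided polynomial comparison
\[
|\xi-\eta|^{2\delta}\ \lesssim\ d(\xi,\eta)\ \lesssim\ |\xi-\eta|,\qquad \delta=\max_j m_j,
\]
and it is the \emph{lower} bound here that yields the implication you need: from $d\gtrsim|\xi-\eta|^{2\delta}$ one gets $|\log d|\lesssim 2\delta\,|\log|\xi-\eta||$, hence Euclidean log-H\"older implies $d$-log-H\"older (and the upper bound gives the converse). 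So your argument goes through once you replace the linear lower bound by this polynomial one.
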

 
Remark that the condition \ref{logCont} is equivalent to the logarithmic continuity with respect to the quasimetric 
$$d(z,w) = \abs{\scp{\partial\rho(\xi)}{\xi-z}} + \abs{ \scp{\partial\rho(z)}{z-\xi} }  $$
since for $\delta = \max(m_1,\ldots,m_n)$ we have
$$ \abs{z-w}^{2\delta} \lesssim d(z,w) \lesssim \abs{z-w}.$$

%The construction is as follows: firstly,  we find a lower bound of these
%operators on characteristic functions of special sets, then we choose
%the appropriate linear combination of these characteristic
%functions such that this combination is $\Lp$-function and its transform under the operator $K_n$ is not in $\Lp.$

\section{Estimates of the kernel of the Cauchy--Leray--Fantappi\`{e} integral operator and construction of the test functions}

Following ideas of \cite{R17} we construct an example of
such function $p(\cdot)$ that operator $K$ is not bounded on
corresponding Lebesgue spaces $\Lp$. We construct an example for $n=2$;
the case $n>2$ is analogous. 

Let $m_1,m_2\geq 1.$ Consider an ellipsoid 
$$\mathcal{E} = \{z=(z_1,z_2)\in\C^2: \abs{z_1}^{2m_1}+\abs{z_2}^{2m_2} <1\}.$$
So 
\begin{equation*}%\label{eq1}
	Kf(z) = -\frac{1}{ 4\pi^2 } \int_{\partial\mathcal{E}} \frac{f(\xi) dS(\xi)}{w(\xi,z)^2},
\end{equation*}
where
\begin{equation*}
	w(\xi,z) = m_1 \abs{\xi_1}^{2(m_1-1)} \overline{\xi}_1 (\xi_1-z_1) + m_2 \abs{\xi_2}^{2(m_2-1)} \overline{\xi}_2 (\xi_2-z_2).
\end{equation*}

\begin{defn}[Boundary patches.]
	 Fix a small parameter $\alpha>0$ and parameters $0<\gamma_1<\gamma_2$, $0<\gamma_3<\gamma_4.$ Define source and target patches:
	 \begin{eqnarray}
	 	W_\alpha  =\left\{\xi=\left(r_1 e^{i \theta_1}, r_2 e^{i \theta_2}\right) \in \partial \mathcal{E}: -\frac{\pi}{12}\leq\theta_1\leq 0,\ -2\alpha\leq\theta_2 \leq -  \alpha,\ \gamma_1\alpha \leq r_1^{2m_1} \leq \gamma_2\alpha\right\}, \\
	 	V_\alpha  =\left\{z=\left(s_1 e^{i \varphi_1}, s_2 e^{i \varphi_2}\right) \in \partial \mathcal{E}: 0\leq\varphi_1\leq \frac{\pi}{12},\ \alpha\leq\varphi_2 \leq 2  \alpha,\  \gamma_3\alpha \leq r_1^{2m_1} \leq \gamma_4\alpha\right\},
	 \end{eqnarray}
	 and test functions 
	 $$h_\alpha(\xi) = \chi_{W_\alpha(\xi)},\quad H_\alpha(z) = Kh_\alpha(z).$$
\end{defn}

Geometrically, $W_\alpha$ and $V_\alpha$ represent thin angular sectors on opposite sides of the real axis, situated near the non-strictly convex portion of the boundary.

\begin{lem}[Measure of boundary patches] For every $0<\gamma_1<\gamma_2$ and $0<\gamma_3<\gamma_4$
	$$S(V_\alpha)\asymp S(W_\alpha) \asymp \alpha^{2}.$$
\end{lem}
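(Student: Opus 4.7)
The plan is to parametrize $\partial\mathcal{E}$ explicitly near the patches, read off the density of $dS$ in those coordinates, and reduce the integration to a one-variable substitution. I would use the chart $\Phi(\theta_1,\theta_2,r_1)=(r_1 e^{i\theta_1}, r_2 e^{i\theta_2})$ with $r_2 = (1-r_1^{2m_1})^{1/(2m_2)}$, which is smooth and regular for $r_1 < 1$.

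First, I would compute the induced Lebesgue measure $d\sigma$ from the Gram determinant of $\partial_{\theta_1}\Phi,\partial_{\theta_2}\Phi,\partial_{r_1}\Phi$. These three tangent vectors in $\mathbb{R}^4$ are pairwise orthogonal (the first two lie in orthogonal complex planes; the third is orthogonal to each because the radial and angular derivatives of $r\mapsto(r\cos\theta,r\sin\theta)$ are orthogonal), so the Gram determinant is simply $r_1^2 r_2^2\,(1 + (dr_2/dr_1)^2)$ with $dr_2/dr_1 = -(m_1/m_2)r_1^{2m_1-1}r_2^{1-2m_2}$. On both $W_\alpha$ and $V_\alpha$ the constraint $r_1^{2m_1}\asymp\alpha$ forces $r_2\asymp 1$ for $\alpha$ small, which makes $dr_2/dr_1$ small and hence $\sqrt{1+(dr_2/dr_1)^2}\asymp 1$. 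Therefore $d\sigma\asymp r_1\,dr_1\,d\theta_1\,d\theta_2$ on the patches. Combining this with the identity $dS\asymp \omega\,d\sigma$ from the previous section, where $\omega=m_1^2 m_2^2 |\xi_1|^{2(m_1-1)}|\xi_2|^{2(m_2-1)}$, and using $|\xi_2|=r_2\asymp 1$, I obtain
\begin{equation*}
dS \asymp r_1^{2m_1-1}\,dr_1\,d\theta_1\,d\theta_2.
\end{equation*}

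Finally, I would integrate over $W_\alpha$: the $\theta_1$ range is a fixed interval contributing an $O(1)$ constant, the $\theta_2$ range has length $\asymp \alpha$, and the $r_1$ integral becomes, after the substitution $u=r_1^{2m_1}$ (so $du = 2m_1 r_1^{2m_1-1}\,dr_1$), the linear integral $\frac{1}{2m_1}\int_{\gamma_1\alpha}^{\gamma_2\alpha}du\asymp\alpha$. Multiplying yields $S(W_\alpha)\asymp\alpha^2$, and the calculation for $V_\alpha$ is identical since the two patches differ only in their angular ranges, which have exactly the same sizes. I do not anticipate any real obstacle; the only point to verify carefully is that $r_2$ (and therefore $\sqrt{1+(dr_2/dr_1)^2}$) stays bounded away from $0$ and $\infty$ uniformly on the patches, which is automatic once $\alpha$ is small enough that $\gamma_2\alpha,\gamma_4\alpha < 1/2$. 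The mechanism that makes the estimate come out clean is the substitution $u=r_1^{2m_1}$, which exactly absorbs the Jacobian factor $r_1^{2m_1-1}$ produced by $\omega$ together with the extra $r_1$ from $d\sigma$.
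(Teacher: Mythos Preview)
Your proof is correct and follows essentially the same approach as the paper: the same parametrization, the same orthogonality observation to compute $d\sigma$, the same reduction via $dS\asymp\omega\,d\sigma$ to the density $r_1^{2m_1-1}\,dr_1\,d\theta_1\,d\theta_2$, and the same integration over the product domain. The only cosmetic difference is that you make the substitution $u=r_1^{2m_1}$ explicit, whereas the paper simply writes down the resulting $\asymp\alpha$ for the radial integral.
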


\begin{proof}
		Consider a real-vector parametrization 
		$$
		\Phi\left(r_1, \theta_1, \theta_2\right)=\left(r_1 \cos \theta_1, r_1 \sin \theta_1, r_2\left(r_1\right) \cos \theta_2, r_2\left(r_1\right) \sin \theta_2\right),
		$$
		where
		$$
		r_2(r_1) =\left(1-r_1^{2m_1}\right)^{1/{(2m_2)}} = 1 - \frac{1}{2m_2}r_1^{2m_1} +O(r_1^{4m_1}).
		$$
		Then $V_\alpha $ is the image under $\Phi$ of the rectangle
		$$
		\left[\gamma_1 r_1^{1/(2m_1)},\gamma_2 r_1^{1/(2m_1)}\right] \times \left[0,\frac{\pi}{12}\right]\times [\alpha,2\alpha].
		$$
		Then the surface Lebesgue-measure $d\sigma$ equals the $3-$dimensional volume element induced by $\Phi,$ i.e.
		$$d\sigma(\xi) = \left\|\partial_{r_1} \Phi \wedge \partial_{\theta_1} \Phi \wedge \partial_{\theta_2} \Phi\right\| d r_1 d \theta_1 d \theta_2 $$,
		where the norm is the Euclidean 3-volume of the parallelepiped in  $\mathbb{R}^4.$
		
		Compute the partial derivatives
		\begin{eqnarray}
				\partial_{r_1} \Phi=\left(\cos \theta_1, \sin \theta_1, r_2^{\prime}\left(r_1\right) \cos \theta_2, r_2^{\prime}\left(r_1\right) \sin \theta_2\right), \\
				\partial_{\theta_1} \Phi=\left(-r_1 \sin \theta_1, r_1 \cos \theta_1, 0,0\right), \\
				\partial_{\theta_2} \Phi=\left(0,0,-r_2 \sin \theta_2, r_2 \cos \theta_2\right) .
		\end{eqnarray}
		These vectors are pairwise orthogonal, hence, 
		$$\left\|\partial_{r_1} \Phi \wedge \partial_{\theta_1} \Phi \wedge \partial_{\theta_2} \Phi\right\| = \left\|\partial_{r_1} \Phi\right\| \cdot\left\|\partial_{\theta_1} \Phi\right\| \cdot\left\|\partial_{\theta_2} \Phi\right\| = r_1 r_2\sqrt{1+\left(r_2'(r_1)\right)^2}.
		$$
		Notice that
		$$  1- \frac{\gamma_1}{2m_2}\alpha \leq r_2 \leq 1$$
		and
		$$r_2'(r_1) = \frac{m_1}{m_2}r_1^{2m_1-1} \left(1-r_1^{2m_1}\right)^{\frac{2m_2-1}{2m_2}} \asymp \alpha^{\frac{2m_1-1}{2m_1}}.
		$$
		Hence,
		$$ r_2\sqrt{1+\left(r_2'(r_1)\right)^2} \asymp 1$$
		and
		\begin{multline*}
		S\left(V_\alpha\right)=\int_{W_\alpha} m_1m_2 \abs{\xi_1}^{2(m_1-1)}\abs{\xi_2}^{2(m_2-1)} d\sigma(\xi)=\\
		m_1m_2\int_0^{\pi/12} \int_{(\gamma_1 \alpha)^{1/(2m_1)}}^{(\gamma_2 \alpha)^{1/(2m_1)}}
		r_1^{2m_1-1} r_2\left(r_1\right)^{2m_2-1} \sqrt{1+\left(r_2^{\prime}\left(r_1\right)\right)^2} d r_1 d \theta_1 d \theta_2 \asymp\\ \int_0^{\pi/12} \int_{\alpha}^{2 \alpha} \int_{(\gamma_1 \alpha)^{1/(2m_1)}}^{(\gamma_2 \alpha)^{1/(2m_1)}} r_1^{2m_1-1} d r_1 d \theta_1 d \theta_2 \asymp \alpha^{2}.
		\end{multline*}
		The same estimate clearly holds for $S(W_\alpha)\asymp \alpha^2.$		\qed
\end{proof}

It turns out that $w(\xi,z)$ is mostly imaginary when $\xi\in W_\alpha$ and $z\in V_\alpha.$ In fact we have the following lemma.

\begin{lem}[Estimate of the kernel on boundary patches] \label{lm:EstOfKernel}
	There exist parameters $0<\gamma_1<\gamma_2$, $0<\gamma_3<\gamma_4$ such that for sufficiently small $\alpha>0$ 
	\begin{equation}\label{lm:EstOfKernel_eq1}
		-\RE\left(\frac{1}{w(\xi,z)^2}\right) \gtrsim \frac{1}{\alpha^2}
	\end{equation}
	uniformly for $\xi\in W_\alpha$ and $z\in V_\alpha$. Moreover, we can choose parameters $0<\gamma_1<\gamma_2$, $0<\gamma_3<\gamma_4$ such that
	\begin{equation}\label{lm:EstOfKernel_eq2}
		-\RE\left(\frac{1}{w(\xi,z)^2}\right) > 0 , 
	\end{equation}
	for $\xi\in W_{\alpha_1}$ and $z\in V_{\alpha_2}$ if $\alpha_1, \alpha_2>0$ are small enough.
\end{lem}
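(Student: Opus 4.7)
Write $\xi_j = r_j e^{i\theta_j}$ and $z_j = s_j e^{i\varphi_j}$; then each summand in $w(\xi,z)$ equals $m_j r_j^{2m_j-1}\bigl(r_j - s_j e^{i(\varphi_j-\theta_j)}\bigr)$, so that $\RE w = \sum_{j=1}^{2} m_j r_j^{2m_j-1}\bigl(r_j - s_j\cos(\varphi_j-\theta_j)\bigr)$ and $\IM w = -\sum_{j=1}^{2} m_j r_j^{2m_j-1} s_j\sin(\varphi_j-\theta_j)$. Since $-\RE(1/w^2) = \bigl((\IM w)^2 - (\RE w)^2\bigr)/|w|^4$, both claims reduce to an estimate of the form $|\IM w| > |\RE w|$, with a uniform gap in the case of \eqref{lm:EstOfKernel_eq1}.

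The driving observation is the asymmetry between $\sin(\varphi_2-\theta_2)\asymp\alpha$ and $1-\cos(\varphi_2-\theta_2) = O(\alpha^2)$, combined with $r_2, s_2 = 1 + O(\alpha)$ which follows from $r_2^{2m_2} = 1 - r_1^{2m_1}$ together with $r_1^{2m_1}\asymp\alpha$. On the one hand, the $j=2$ term contributes $-m_2\sin(\varphi_2-\theta_2)+O(\alpha^2)$ to $\IM w$, and since $\sin(\varphi_1-\theta_1)\ge 0$ on the patches, both summands in $\IM w$ have the same sign; hence $|\IM w|\gtrsim\alpha$. On the other hand, a careful Taylor expansion that exploits the cancellation of the $O(1)$ parts of $m_2 r_2^{2m_2}$ and $m_2 r_2^{2m_2-1}s_2$ yields $m_2 r_2^{2m_2-1}\bigl(r_2 - s_2\cos(\varphi_2-\theta_2)\bigr) = \tfrac{1}{2}(s_1^{2m_1}-r_1^{2m_1}) + O(\alpha^2)$, while the $j=1$ real part is $m_1 r_1^{2m_1}\bigl(1 - (s_1/r_1)\cos(\varphi_1-\theta_1)\bigr) = O(\alpha)$.

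The next step is to choose $\gamma_i$ so that the $O(\alpha)$ coefficient of $\RE w$ is strictly smaller than the one controlling $|\IM w|$. I would take $\gamma_1 = \gamma_3 = \gamma$ and $\gamma_2 = \gamma_4 = \gamma(1+\delta)$ with $\gamma, \delta > 0$ small. Then $r_1\approx s_1\asymp(\gamma\alpha)^{1/(2m_1)}$, so $|s_1^{2m_1}-r_1^{2m_1}|\le\delta\gamma\alpha$ and, using $|\varphi_1-\theta_1|\le\pi/6$, the $j=1$ real part is bounded by $m_1\gamma\alpha\bigl(1-\cos(\pi/6)\bigr) + O(\delta\gamma\alpha)$. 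Taking $\gamma, \delta$ sufficiently small in terms of $m_1, m_2$ forces $|\RE w|\le\tfrac{1}{2}|\IM w|$, whence $(\IM w)^2 - (\RE w)^2 \ge \tfrac{3}{4}(\IM w)^2 \gtrsim \alpha^2$ and $|w|\lesssim\alpha$, which yields \eqref{lm:EstOfKernel_eq1}.

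For the strict-sign claim \eqref{lm:EstOfKernel_eq2} with separate scales $\alpha_1, \alpha_2$, the same analysis carries through: the angular gap becomes $\varphi_2-\theta_2\in[\alpha_1+\alpha_2, 2(\alpha_1+\alpha_2)]$, so $|\IM w|\gtrsim m_2(\alpha_1+\alpha_2)$, while each piece of $\RE w$ is controlled by a small multiple of $\max(\alpha_1,\alpha_2)$ together with $O((\alpha_1+\alpha_2)^2)$ corrections; shrinking the $\gamma_i$ further guarantees $|\IM w| > |\RE w|$ and hence strict negativity of $\RE(1/w^2)$. The principal technical obstacle is the precise Taylor-expansion bookkeeping in the $j=2$ real part that cancels the $O(1)$ terms and exposes the $O(\alpha)$ coefficient $(s_1^{2m_1}-r_1^{2m_1})/2$, which the choice of narrow parameter intervals then renders uniformly small.
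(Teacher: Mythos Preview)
Your proposal is correct and follows essentially the same route as the paper: split $w$ into real and imaginary parts, observe that the $j=2$ contribution to $\IM w$ is of order $m_2\alpha$ independently of the $\gamma_j$, while every piece of $\RE w$ carries a factor of the $\gamma_j$ (via $r_1^{2m_1}\asymp\gamma\alpha$ and the Taylor expansion of $r_2-s_2$), and then take the $\gamma_j$ small to force $|\RE w|<c\,|\IM w|$; together with $|w|\lesssim\alpha$ this gives both claims. The only notable difference is in the parameter choice: the paper takes nested ranges $\gamma_3<\gamma_4<\gamma_1<\gamma_2$ (with $\gamma_4=\gamma_1/2$) and in passing verifies $\RE w>0$, whereas you take coincident narrow ranges $[\gamma,\gamma(1+\delta)]$ and bypass the sign of $\RE w$ entirely---your variant is a bit cleaner, and the extra parameter $\delta$ is in fact dispensable since smallness of $\gamma$ alone already forces $|\RE w|=O(\gamma\alpha)$.
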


\begin{proof} 
	Indeed, let $\xi=(\xi_1,\xi_2) =(r_1e^{i\theta_1},r_2e^{i\theta_2}) \in W_\alpha$ and $z = (s_1e^{i\varphi_1},s_2e^{i\varphi_2}) \in V_\alpha$. Then
	$$w(\xi,z) = m_1r_1^{2m_1-1}\left(r_1-s_1e^{i(\varphi_1-\theta_1)}\right) + m_2r_2^{2m_2-1}\left(r_2-s_2e^{i(\varphi_2-\theta_2)}\right)  =  A_1 + A_2.$$
	
	Since
	$$\abs{ r_1-s_1e^{i(\varphi_1-\theta_1)} }\leq \abs{r_1}+\abs{s_1} \lesssim \alpha^{1/(2m_1)}
	$$
	then
	$$\abs{A_1}\lesssim r_1^{2m_1-1}\alpha^{1/(2m_1)} \lesssim \alpha.$$
	To estimate $A_2$ notice that
	$$\abs{r_2-s_2}=(1-r_1^{2m_1})^{1/(2m_2)} - (1-s_1^{2m_1})^{1/(2m_2)} \lesssim \abs{ r_1^{2m_1} -s_1^{2m_1} } \lesssim \alpha.$$
	Hence, since $\varphi_2-\theta_2\in (2\alpha,4\alpha)$, we conclude 
	$$A_2 \lesssim \abs{r_2-s_2}+s_2\abs{1-e^{i(\varphi_2-\theta_2)} }\lesssim \alpha.$$
	Combing these estimates we see that 
	$$\abs{w(\xi,z)} \lesssim \alpha.$$
	
	The estimates above are valid for arbitrary $\gamma_j>0,$ $j=1,\ldots4.$ However, we also need to control the argument of $w(\xi,z).$ This will be achieved by the appropriate choice of parameters $\gamma_j,$ $j=1,\ldots4.$ Let first $\gamma_4=\frac{\gamma_1}{2}$ and $\gamma_{3} =\frac{\gamma_{4}}{2}$ and recall that
	$$ \gamma_1\alpha \leq r_1^{2m_1} \leq \gamma_2\alpha; \quad  \gamma_3\alpha \leq r_1^{2m_1} \leq \gamma_4\alpha ;$$
	$$r_2^{2m_2} = 1-r_1^{2m_1};\quad s_2^{2m_2} =  1-s_1^{2m_1}.$$

	The choice of $\gamma_4=\frac{\gamma_1}{2}$ ensures that $2s_1^{m_1}\leq  r_1^{m_1}<1$ and choosing $\alpha>0$ small enough we may assume that $2^{-1/(2m_2)}<r_2<s_2< 1.$

	Consider first the real part of $w(\xi,z)$:
	\begin{multline} \label{lm:EstOfKernel_REw}
		\RE w(\xi,z) = m_1 r_1^{2m_1-1} \left(r_1-s_1\cos(\varphi_1-\theta_1)\right)+\\
		m_2 r_2^{2m_2-1} \left(r_2-s_2\cos(\varphi_2-\theta_2)\right) = B_1+B_2.
	\end{multline}
	The first term 
	$$B_1\leq m_1 r_1^{2m_1} \leq m_1 \gamma_2 \alpha.$$
	For the the second term notice that if $\alpha>0$ is small enough
	\begin{equation} \label{eq2}
		0<s_2-r_2= \frac{ s_2^{2m_2}-r_2^{2m_2} }{ \sum\limits_{k=0}^{2m_2} s_2^k r_2^{2m_2-k} } \leq 
		\frac{ r_1^{2m_1} - s_1^{2m_1} }{ 2m_2 r_2^{2m_2} } \leq \frac{ \gamma_2 -\gamma_4 }{ 2 m_2 }  \alpha + O(\alpha^2)
		\leq\frac{ \gamma_2 }{ m_2 } \alpha.
	\end{equation}
	
	Since $$1-\cos(\varphi_2-\theta_2) \leq 8\alpha^2$$ then 
	$$|B_2|\leq m_2 \abs{r_2-s_2} + m_2 s_2(1-\cos(\varphi_2-\theta_2))\leq {2\gamma_2}\alpha$$
	if $\alpha>0$ is small enough.
	Finally,
	$$\RE{w(\xi,z)} \leq (2+m_1) \gamma_2\alpha.$$
	The real part is, indeed, positive. 
	Analogously to \eqref{eq2} we see that
	\begin{equation} \label{eq3}
	s_2-r_2= \frac{ s_2^{2m_2}-r_2^{2m_2} }{ \sum\limits_{k=0}^{2m_2} s_2^k r_2^{2m_2-k} } \leq \frac{\gamma_2-\gamma_4}{2m_2} \alpha + O(\alpha^2)
	\end{equation}
	Hence,
	\begin{multline*}
		\RE{w(\xi,z)} = m_1 r_1^{2m_1} \left(1-\frac{s_1}{r_1}\cos(\varphi_1-\theta_1)\right)+\\
		m_2 r_2^{2m_2-1}  \left(r_2-s_2\cos(\varphi_2-\theta_2)\right) \geq
		m_1 r_1^{2m_1} \left(1-\frac{s_1}{2r_1}\right) -
		m_2  \left(s_2-r_2\right) \geq\\
		 m_1\left(1-\frac{1}{4}\right)\gamma_1 \alpha -   \frac{\gamma_{2}-\gamma_4}{2} \alpha +O(\alpha^2)
		 \leq \left(\frac{3}{4} m_1-\frac{\gamma_2}{2\gamma_1}\right)\gamma_1\alpha
		  = c(\gamma_2/\gamma_2)\gamma_1\alpha
	\end{multline*}
 	when $\alpha>0$ is small enough. Since $m_1\geq1$ then 
	we can choose the relation $\gamma_2/\gamma_1>0$ such that $c(\gamma_1,\gamma_2)>0$, which ensures that 
	$\RE w(\xi,z) >0$. %when $\alpha>0$ is small enough. 
	
	Consider now the imaginary part:
	\begin{multline}\label{eq:est:ImW}
		-\IM{w(\xi,z)} = m_1 r_1^{2m_1-1} s_1 \sin(\varphi_1-\theta_1) + 
		m_2 r_2^{2m_2-1} s_2 \sin(\varphi_2-\theta_2) > \\
		m_2 r_2^{2m_2-1} s_2 \sin(\varphi_2-\theta_2)\geq  \frac{m_2}{\pi}(\varphi_2-\theta_2) \geq\frac{2m_2}{\pi} \alpha.
	\end{multline}
	
	Hence,
	$$0< -\cot\arg w(\xi,z) =-\frac{\RE w(\xi,z)}{\IM w(\xi,z)} \leq \frac{2+m_1}{2m_2} \pi \gamma_2 < \frac{2+m_1}{2m_2} \pi \gamma_1  < \frac{1}{\sqrt{3} }
	$$
	if we choose $\gamma_1>0$ small enough. 
	
	Since $\RE{w(\xi,z)}>0$ and $\IM {w(\xi,z)}<0$ this guarantees $-\frac{\pi}{2}<\arg w(\xi,z)<-\frac{2\pi}{3}.$ Hence, $-\cos(2\arg w(\xi,z))>\frac{1}{2}$ and
	$$
	-\RE \frac{1}{w(\xi,z)^2} =-\frac{\cos(2\arg w(\xi,z))}{\abs{w(\xi,z)}^2} \gtrsim \frac{1}{\alpha^2}
	$$
	To prove that 
	$$-\RE\left(\frac{1}{w(\xi,z)^2}\right) \geq 0 , $$
	for $\xi\in W_{\alpha_1}$ and $z\in V_{\alpha_2}$ if $\alpha_1, \alpha_2>0$ are small enough we show that 
	$$\abs{\RE{w(\xi,z)}} \leq C(\alpha_1+\alpha_2) \leq \abs{\IM w(\xi,z)}.$$
	The estimate \eqref{eq:est:ImW} implies that
	$$ \abs{\IM w(\xi,z)} \geq \frac{m_2}{\pi}(\alpha_1+\alpha_2).$$
	
	%Suppose now $\alpha_1,\alpha_2<\alpha_0.$ 
	To estimate real part recall the formula \eqref{lm:EstOfKernel_REw}. Now
%	$$ 
%	\abs{r_1-s_1}\leq (\gamma_{2}\alpha_1)^{1/(2m_1)}+(\gamma_4\alpha_2)^{1/(2m_1)} <  2\gamma_{2}^{1/(2m_1)}(\alpha_1+\alpha_2)^{1/(2m_1)} ;
%	$$
	\begin{multline*}
	\abs{B_1}=r_1^{2m_1-1}\abs{ r_1-s_1\cos(\varphi_1-\theta_1) } \leq r_1^{2m_1} + r_1^{2m_1-1} s_1 \leq \\	\left( \gamma_2\alpha_1+(\gamma_2\alpha_1)^{ 1-1/(2m_1) }(\gamma_{4}\alpha_2)^{ 1/(2m_1) }\right)\leq 2\gamma_2(\alpha_1+\alpha_2)
	\end{multline*}
	To estimate the second term $B_2$ consider
	$$ 
	1-\cos(\varphi_2-\theta_2) = O((\alpha_1+\alpha_2)^2);
	$$
	$$
	r_2=\left(1-r_1^{2m_1}\right)^{1/(2m_2)} = 1-\frac{r_1^{2m_1}}{2m_2} + O(r_1^{4m_1}) = 1-\frac{r_1^{2m_1}}{2m_2} + O((\alpha_1+\alpha_2)^2);
	$$
	$$
	s_2 = 1-\frac{s_1^{2m_1}}{2m_2} + O((\alpha_1+\alpha_2)^2);
	$$
	Hence,
	\begin{multline*}
		\abs{s_2-r_2} =  \frac{ \abs{s_1^{2m_1}-r_1^{2m_1}} }{  2m_2 } + O(\alpha_1+\alpha_2)^2 \leq \\
		\frac{\gamma_2 + \gamma_4}{2m_2}(\alpha_1+\alpha_2)+O(\alpha_1+\alpha_2)^2\leq \frac{\gamma_2}{m_2} (\alpha_1+\alpha_2)
	\end{multline*}
	and
	$$\abs{B_2}\leq m_2\abs{s_2-r_2}+m_2(1-\cos(\varphi_2-\theta_2))\leq 
	\gamma_2 (\alpha_1+\alpha_2) +O(\alpha_1+\alpha_2)^2\leq 2 \gamma_2 (\alpha_1+\alpha_2)
	$$
	if $\alpha_0>0$ is small enough. 	
	
	Finally,
 	$$
		\abs{\RE w(\xi,z) }\leq \abs{B_1}+\abs{B_2} \leq 2\left(m_1+1\right)\gamma_{2}(\alpha_1+\alpha_2)
	$$
	Choosing $\gamma_{2}>0$ such that
	$$ 2\left(m_1+1\right)\gamma_{2} < \frac{m_2}{\pi}$$
	we have 
	$$\abs{\RE w(\xi,z)} < \abs{\IM w(\xi,z)}$$
	and
	$$-\RE \frac{1}{w(\xi,z)^2} = \frac{(\IM{w(\xi,z)})^2-(\RE{w(\xi,z)})^2}{\abs{w(\xi,z)}^4} >0$$
	for $\xi\in W_{\alpha_1}$ and $z\in V_{\alpha_1}$ 
	which finishes the proof of the Lemma.\qed	
\end{proof}

\begin{lem}[Estimate of the image of test functions]
	Suppose that $\gamma_j,$ $j=1,\ldots,4$ are chosen as in Lemma~\ref{lm:EstOfKernel}. Then
	$$\RE{H_\alpha}(z) \gtrsim 1,\quad z\in V_\alpha$$ 
	and 
	$$ 
	\RE{H_{\alpha_1}(z)} > 0,\quad z\in V_{\alpha_2}
	$$
	if $\alpha_1,\alpha_2$ are small enough. 
\end{lem}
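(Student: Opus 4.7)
The plan is to reduce the statement to a direct application of Lemma~\ref{lm:EstOfKernel} combined with the measure estimate $S(W_\alpha)\asymp \alpha^2$ from the previous lemma. Starting from the definition
$$H_\alpha(z) = Kh_\alpha(z) = -\frac{1}{4\pi^2}\int_{W_\alpha}\frac{dS(\xi)}{w(\xi,z)^2},$$
I take real parts and pull the minus sign inside to obtain
$$\RE H_\alpha(z) = \frac{1}{4\pi^2}\int_{W_\alpha}\left(-\RE\frac{1}{w(\xi,z)^2}\right)dS(\xi).$$
This rewriting is the key step, because it turns the estimate into a question about the integrand $-\RE(1/w^2)$, which is exactly what Lemma~\ref{lm:EstOfKernel} controls.

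For the first assertion, fix $z \in V_\alpha$ and apply the pointwise bound \eqref{lm:EstOfKernel_eq1}: for every $\xi \in W_\alpha$ one has $-\RE(1/w(\xi,z)^2) \gtrsim 1/\alpha^2$. Integrating over $W_\alpha$ and using $S(W_\alpha) \asymp \alpha^2$ yields
$$\RE H_\alpha(z) \gtrsim \frac{1}{\alpha^2}\cdot S(W_\alpha) \asymp 1,$$
with the implied constant independent of $\alpha$. For the second assertion, fix $z \in V_{\alpha_2}$ with $\alpha_1, \alpha_2$ small. The pointwise positivity \eqref{lm:EstOfKernel_eq2} gives $-\RE(1/w(\xi,z)^2) > 0$ for every $\xi \in W_{\alpha_1}$, and since $W_{\alpha_1}$ has positive surface measure the integral is strictly positive, i.e. $\RE H_{\alpha_1}(z) > 0$.

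There is essentially no obstacle here beyond keeping track of signs correctly: the factor $(2\pi i)^{-2} = -1/(4\pi^2)$ produced in \eqref{eq1} combines with the minus sign in $-\RE(1/w^2)$ of Lemma~\ref{lm:EstOfKernel} to yield the positive lower bound for $\RE H_\alpha$. All the geometric and analytic work — namely controlling the argument of $w(\xi,z)$ so that $\cos(2\arg w) < 0$ — has already been carried out in Lemma~\ref{lm:EstOfKernel}, so this lemma is just the aggregation of those pointwise facts via integration.
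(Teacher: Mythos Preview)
Your proof is correct and follows essentially the same approach as the paper's: you rewrite $\RE H_\alpha(z)$ as an integral of $-\RE(1/w(\xi,z)^2)$ over $W_\alpha$, invoke the pointwise bounds from Lemma~\ref{lm:EstOfKernel}, and combine with $S(W_\alpha)\asymp\alpha^2$. Your handling of the signs is in fact cleaner than the paper's own write-up.
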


\begin{proof}
	Indeed,
	$$\RE \frac{1}{w(\xi,z)^2} \gtrsim \frac{1}{\alpha^2},\ \xi\in W_\alpha,\ z\in V_\alpha.$$
	Consequently,
	$$ \RE{H_\alpha} = -\frac{1}{\pi^2} \RE \int_{W_\alpha} \frac{ dS(\xi) }{w(\xi,z)^2} \gtrsim \alpha^{-2}\alpha^{2} = 1.$$
	Also, if if $\alpha_1,\alpha_2$ are small enough then
	$$\RE \frac{1}{w(\xi,z)^2} > 0,\ \xi\in W_{\alpha_1},\ z\in V_{\alpha_2}$$
	and
	$$ 
	\RE{H_{\alpha_1}(z)} > 0,\quad z\in V_{\alpha_2}
	$$	
	which finishes the proof.\qed
\end{proof}

\section{The construction of the exponent such that the operator $K$ is not bounded in  $L^{p(\cdot)}(\partial \mathcal{E}).$}

Let $p_0>1$ and $\psi:[-\pi,\pi]\to(1-p_0,+\infty)$ be a continuous function such that it is increasing on $[0,\pi]$ and
\begin{eqnarray}
	\psi(-\pi)=\psi(\pi);\label{eq:exp1}\\ 
	\psi(\alpha)<0,\ \alpha<0;\ \psi(0)=0;\label{eq:exp2}\\
	\lim_{\alpha\to0+}\psi(\alpha)\ln\frac{1}{\alpha} = +\infty;\label{eq:exp3}
\end{eqnarray}
Then 
\begin{equation}
	\lim_{\alpha\to0+}\left(\frac{1}{p_0+\psi(\alpha)} - \frac{1}{p_0}\right) \ln\frac{1}{\alpha} = -\infty
\end{equation}
and
\begin{equation}
	\lim_{\alpha\to0+} \left(\frac{1}{\alpha} \right)^{\frac{1}{p_0+\psi(\alpha)} - \frac{1}{p_0}} = 0;
\end{equation}

So, we can choose $\alpha_k$ such that
\begin{equation} \label{eq:alpha_k:1}
	\alpha_{k+1}\leq 2\alpha_k
\end{equation}
and
\begin{equation}\label{eq:alpha_k:2}
	 \left( \frac{1}{\alpha_k} \right)^{ \frac{1}{p_0+\psi(\alpha_k)}-\frac{1}{p_0} } \leq 2^{-k/p_0};
\end{equation}
We denote 
$$W_k=W_{\alpha_k};\quad V_k=V_{\alpha_k};\quad
h_k=h_{W_{\alpha_k}};\quad H_k=H_{W_{\alpha_k}}.$$

Let 
$$\lambda_k= \alpha_{k}^{-\frac{2}{p_0+\psi(\alpha_{k})}}$$
and
$$h(\xi) = \sum\limits_{k=1}^\infty \lambda_k h_k(\xi).$$

For $\xi=\left(r_1e^{i\theta_1},r_2e^{i\theta_2}\right)\in\partial\mathcal{E}$ define 
$$ p(\xi) = p_0+\psi(\theta_2).$$

\begin{thm}
	The function $h$ belongs to $\Lp$ while its image $Kh$ doesn't.
\end{thm}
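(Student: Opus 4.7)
The plan is to compute the modular $\rho(f) = \int_{\partial\mathcal{E}} |f|^{p(\xi)}\,d\sigma(\xi)$ directly for $f = h$ and for $f = Kh$: showing $\rho(h) < \infty$ gives $h \in \Lp$ after rescaling, and showing $\rho(Kh/\mu) = \infty$ for every $\mu > 0$ gives $Kh \notin \Lp$. The driving asymmetry is encoded in $\psi$: on the support $W_k$ of $h_k$ one has $\theta_2 < 0$, hence $p(\xi) < p_0$, which aids integrability; on the target patches $V_k$ one has $\varphi_2 > 0$, hence $p(z) > p_0$, which makes large values of $Kh$ expensive.

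For $h \in \Lp$, take the $\alpha_k$ to decay fast enough that the $W_k$ are pairwise disjoint; then $\rho(h) = \sum_k \int_{W_k} \lambda_k^{p(\xi)}\,d\sigma$. On $W_k$ one has $p(\xi) < p_0$ and $\lambda_k \geq 1$, so $\lambda_k^{p(\xi)} \leq \lambda_k^{p_0}$; the measure lemma gives $S(W_k) \asymp \alpha_k^2$, and a short calculation yields
$$
\int_{W_k} \lambda_k^{p(\xi)}\,d\sigma \lesssim \lambda_k^{p_0}\alpha_k^2 = \alpha_k^{2\psi(\alpha_k)/(p_0+\psi(\alpha_k))} = \left(\alpha_k^{\psi(\alpha_k)/(p_0(p_0+\psi(\alpha_k)))}\right)^{2p_0}.
$$
The calibration \eqref{eq:alpha_k:2} forces the inner factor to be $\leq 2^{-k/p_0}$, giving $\int_{W_k}\lambda_k^{p(\xi)}\,d\sigma \lesssim 2^{-2k}$. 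Summing, $\rho(h) < \infty$, so $h \in \Lp$.

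For $Kh \notin \Lp$, write $Kh = \sum_k \lambda_k H_k$. By the lemma on the image of test functions there exists $k_0$ such that $\RE H_k(z) \geq 0$ whenever $k \geq k_0$ and $z \in V_m$ with $m \geq k_0$, while the same lemma gives $\RE H_m(z) \gtrsim 1$ on $V_m$. The finitely many terms with $k < k_0$ are bounded on $\bigcup_{m \geq m_0} V_m$ (for large $m$ the patches $V_m$ stay at positive distance from each fixed $W_k$, $k < k_0$), so $\left|\sum_{k<k_0} \lambda_k H_k(z)\right| \leq M$ there. Hence for $z \in V_m$ with $m$ large enough,
$$
\RE Kh(z) \geq c\lambda_m - M \geq \tfrac{c}{2}\lambda_m.
$$
For any fixed $\mu > 0$ and $m$ large enough that $c\lambda_m/(2\mu) \geq 1$, using $p(z) \geq p_0 + \psi(\alpha_m)$ on $V_m$,
$$
\int_{V_m} \left|\frac{Kh(z)}{\mu}\right|^{p(z)} d\sigma \geq \left(\frac{c\lambda_m}{2\mu}\right)^{p_0+\psi(\alpha_m)} S(V_m) \asymp \left(\frac{c}{2\mu}\right)^{p_0+\psi(\alpha_m)},
$$
since $\lambda_m^{p_0+\psi(\alpha_m)} = \alpha_m^{-2}$ and $S(V_m) \asymp \alpha_m^2$. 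As $\psi(\alpha_m)\to 0$, the right side is uniformly bounded below by a positive constant. Choosing $\alpha_k$ also decaying fast enough that the $V_m$ are pairwise disjoint, summation over $m$ diverges, hence $\rho(Kh/\mu) = \infty$ for every $\mu > 0$.

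The main obstacle is securing the lower bound $\RE Kh(z) \gtrsim \lambda_m$ on $V_m$: one must exclude destructive interference between $\lambda_m H_m$ and the other $\lambda_k H_k$. This is precisely what the sign-control in Lemma~\ref{lm:EstOfKernel} supplies: once the indices pass a threshold, every $H_k$ has nonnegative real part on each $V_m$, so the tail terms can only reinforce the dominant $\lambda_m H_m$, while the finite sub-threshold head is uniformly bounded on the relevant $V_m$ and is dwarfed by $\lambda_m \to \infty$.
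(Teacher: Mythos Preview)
Your argument is correct and follows the paper's approach essentially verbatim: disjointness of the $W_k$ together with $p(\xi)<p_0$ there gives a summable modular for $h$, while the sign control from Lemma~\ref{lm:EstOfKernel} together with $p(z)\geq p_0+\psi(\alpha_m)$ on $V_m$ forces the modular of $Kh$ to diverge. Your separate treatment of the finitely many terms $k<k_0$ is unnecessary---the paper simply starts the sequence $(\alpha_k)$ below the uniform threshold in the lemma so that $\RE H_j(z)>0$ holds for \emph{all} pairs $(j,k)$---whereas your explicit verification that $\rho(Kh/\mu)=\infty$ for every $\mu>0$ is a small refinement the paper omits.
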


\begin{proof}
	Notice that since $\alpha_{k+1}<2\alpha_k$ the patches $W_k$, $V_k$ are disjoint. Hence,
	\begin{multline*}
	\int_{\partial\mathcal{E}} \abs{h(\xi)}^{p(\xi)} dS(\xi) \leq
	\sum\limits_{k=1}^\infty \int_{W_k}	\lambda_k^{p(\xi)} dS(\xi) \leq\\ \sum\limits_{k=1}^\infty \int_{W_k}	\lambda_k^{p_0} dS(\xi) \lesssim 
	\sum\limits_{k=1}^\infty \lambda_k^{p_0} S(W_k)\lesssim \sum\limits_{k=1}^\infty \alpha_k^{- 2p_0/(p_0+\psi(\alpha_k))}  \alpha_{k}^{2} \lesssim\\ \sum\limits_{k=1}^\infty \alpha_k^{-2{p_0}\left( \frac{1}{p_0}-\frac{1}{p_0+\psi(\alpha_k)} \right)}\lesssim 
	\sum\limits_{k=1}^\infty 2^{- k}<\infty.
	\end{multline*}
	Consequently, $h\in \Lp.$
	
	Let $H(z)=Kh(z).$ Then 
	$$\abs{H(z)}\geq \RE{H(z)} \geq \sum\limits_{j = 1}^\infty \lambda_j \RE{H_j(z)} > \lambda_k \RE{H_k(z)} = \lambda_k ,\quad z\in V_k,$$
	since $\RE{H_j(z)}>0$ if $z\in V_k$. Consequently, 
	\begin{multline*}
		\int_{\partial\mathcal{E}} \abs{H(z)}^{p(z)} dS(z) \geq 
		\sum\limits_{k=1}^\infty \int_{V_k} \abs{H(z)}^{p(z)} dS(z)>\\
		\sum\limits_{k=1}^\infty \int_{V_k} \lambda_k^{p_0+\psi(z)} dS(z)\gtrsim \sum\limits_{k=1}^\infty \lambda_k^{p_0+\psi(\alpha_k)}\alpha_k^2 =\sum\limits_{k=1}^\infty  1 =+ \infty
	\end{multline*} 
	and $H\not\in \Lp.$ This finishes the proof of the theorem.\qed
\end{proof}

%\section{Concluding remarks}
%
%\begin{itemize}
%	\item The same construction applies to higher-dimensional ellipsoids
%	$$\mathcal{E}=\left\{z\in\Cn:\sum_{j=1}^n\left|z_j\right|^{2 m_j}<1\right\}$$
%	\item Instead of the conditions $\theta_1,\varphi_1\in\left(0,\pi/12\right)$ in the definitions of $W_\alpha,$ $V_\alpha$ we can  $\theta_1,\varphi_1\in\left(0,\delta(\alpha)\right)$, where $\delta(\alpha)=o(1)$  is such that 
%	$$\sum_{k=1}^\infty \delta(\alpha_k)^{p_0+\psi(\alpha_k)} = +\infty,$$
%	where $\alpha_k$ are choose according to conditions \eqref{eq:alpha_k:1} and \eqref{eq:alpha_k:2}. 
%\end{itemize}

%\section*{References}

\end{document}